\DeclareMathOperator{\wt}{wt}
\newcommand{\vek}[1]{\boldsymbol{#1}}
\theoremstyle{definition}
\newtheorem{theorem}{Theorem}
\newtheorem{lemma}[theorem]{Lemma}
\date{}
\author{
Ivan Landjev
%\thanks{This research was supported by
%	the Bulgarian National Science Research Fund
%	under Contract KP-06-N72/6 - 2023.},
Konstantin Vorobev
\thanks{This research was supported 
	by the NSP P. Beron project CP-MACT.}
}
\title{An upper bound on the size of a code with $s$ distances}
\begin{document}
\maketitle

\begin{abstract}
Let $C$ be a binary code of length $n$ with distances
$0<d_1<\cdots<d_s\le n$.  
In this note we prove 
a general upper bound on the size of $C$
without any restriction on the distances $d_i$.
The bound is asymptotically optimal. 
\end{abstract}

\textbf{Keywords:} $s$-weight codes, 
non-linear codes, main problem of coding theory

\textbf{MSC 2010 Classification:} 05A05, 05A20, 94B25, 94B65

\section{Introduction}
\label{sec:intro}

A binary (nonlinear) code
$C\subset\mathbb{F}_2^n$ is called an $s$-distance code if the possible distances
between two different words of the code take on $s$ different values, i.e.
\[\{d(u,v)\mid u,v\in C, u\ne v\}=\{d_1,d_2,\ldots,d_s\},\]
where $0<d_1<d_2<\ldots,d_s\le n$. 
Here $d(u,v)$ denotes the Hamming distance between $u$ and $v$.
A binary code of length $n$, cardinality $M$, and with distances from
the set $\mathcal{D}$
is called an $(n,M,\mathcal{D})$-code. If the cardinality is not specified we speak of an $(n,\mathcal{D})$-code. A natural problem is to 
determine the maximal cardinality, denoted by $A_2(n,\mathcal{D})$, of a binary code of fixed length $n$ and distances in the set $\mathcal{D}$.
 
In a recent paper by Barg et al. \cite{Barg-et-al} it was proved that 
\begin{equation}
\label{eq:barg}
A_2(n,\{d_1,d_2\})\le {n\choose2}+1,
\end{equation}
for $n\ge6$. There is no restriction on the distances $d_1$ and $d_2$.
This bound is achieved, e.g. we have $A_2(n,\{2,4\})={n\choose2}+1$ for all $n\ge6$
\cite{BDZZ21,LRV23}. The proof of \cite{Barg-et-al} is based on the idea to embed a binary code into a sphere $S^{n-1}$ and relate its size to spherical two distance sets
and spherical equiangular sets, using some recently proved bounds.

For codes with more than two weights, we have the following general results.

\begin{theorem}(A. Barg, O. Musin, 2011)\cite{BM11}
Let $\mathcal{D}=\{d_1,\ldots,d_s\}$, where 
$\sum_i d_i\le \frac{1}{2}sn$. Then
\[A_2(n,\mathcal{D})\le \sum_{i=0}^{s-2} {n\choose i}+ {n\choose s}.\]
\end{theorem}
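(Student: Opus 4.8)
The plan is to use the classical \emph{polynomial (Gram-matrix) method}, exploiting the hypothesis $\sum_i d_i\le\tfrac12 sn$ only at the very end through a positive-semidefiniteness argument. First I would embed the code isometrically into the real cube: map $c\in C$ to $u_c=\bigl((-1)^{c_1},\dots,(-1)^{c_n}\bigr)\in\{\pm1\}^n\subset\mathbb{R}^n$, so that $\langle u_c,u_{c'}\rangle=n-2\,d(c,c')$. Setting $t_i=n-2d_i$ (the $s$ distinct ``inner-product values''), we have $\langle u_a,u_b\rangle\in\{t_1,\dots,t_s\}$ for distinct code points and $\langle u_a,u_a\rangle=n$. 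The crucial translation of the hypothesis is that $\sum_i t_i=sn-2\sum_i d_i\ge0$. The natural annihilator is the one-variable polynomial $p(\lambda)=\prod_{i=1}^s(\lambda-t_i)=\sum_{k=0}^s c_k\lambda^k$, whose top coefficients are $c_s=1$ and $c_{s-1}=-\sum_i t_i$.

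Next I would introduce, for $N=|C|$, the family of Hadamard powers of the Gram matrix: let $Y_k$ be the $N\times N$ matrix with entries $(Y_k)_{ab}=\langle u_a,u_b\rangle^{\,k}$. Each $Y_k$ is the Gram matrix of the tensors $u_a^{\otimes k}$, hence positive semidefinite; and since $u_{a,j}^2=1$ on the cube, every column of $Y_k$ lies in the span of the evaluation vectors of the \emph{multilinear} monomials $x_S$ with $|S|\le k$ and $|S|\equiv k\pmod 2$. Forming $P:=\sum_{k=0}^s c_kY_k$ gives a matrix with entries $P_{ab}=p\bigl(\langle u_a,u_b\rangle\bigr)$: off the diagonal this vanishes, while on the diagonal $p(n)=\prod_i(n-t_i)=\prod_i 2d_i>0$. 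Thus $P=\bigl(\prod_i 2d_i\bigr)I$ is positive definite and has full rank $N$.

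The heart of the argument is then to rearrange this identity so as to isolate $Y_{s-1}$, whose coefficient is exactly $c_{s-1}=-\sum_i t_i$. Moving that single term to the other side yields
\[
\Bigl(\textstyle\prod_i 2d_i\Bigr)\,I+\Bigl(\textstyle\sum_i t_i\Bigr)\,Y_{s-1}
\;=\;Y_s+\sum_{k=0}^{s-2}c_k\,Y_k .
\]
Here the hypothesis enters decisively: because $\sum_i t_i\ge0$ and $Y_{s-1}\succeq0$, the left-hand side is $\succeq\bigl(\prod_i 2d_i\bigr)I\succ0$, so it is positive definite and its column space is all of $\mathbb{R}^N$. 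The right-hand side, however, no longer contains $Y_{s-1}$; and among all $Y_k$ with $k\le s$, a multilinear monomial of degree exactly $s-1$ can only arise from $Y_{s-1}$ (it requires $k\ge s-1$ with $k\equiv s-1\pmod2$). Hence every column of the right-hand side lies in the span of the evaluation vectors of the monomials $x_S$ with $|S|\le s-2$ or $|S|=s$. Comparing dimensions gives
\[
N\;\le\;\sum_{i=0}^{s-2}\binom{n}{i}+\binom{n}{s},
\]
as claimed.

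I expect the main obstacle to be precisely the step of \emph{using the inequality rather than an equality}: the naive annihilator only yields the weaker bound $\sum_{i=0}^{s}\binom{n}{i}$, and the degree-$(s-1)$ term genuinely survives when $\sum_i t_i>0$. The resolution is the observation above — that the offending term can be transported to the positive-definite side of the identity exactly when $\sum_i t_i\ge0$, i.e. when $\sum_i d_i\le\tfrac12 sn$ — which is what allows the degree-$(s-1)$ monomials to be discarded from the ambient space while keeping the rank equal to $N$. The remaining verifications (the PSD property and column-space containment of the $Y_k$, and the monomial count $\sum_{i\le s-2}\binom ni+\binom ns$) are routine.
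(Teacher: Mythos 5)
Your proof is correct, but it cannot be compared with ``the paper's own proof'' for a simple reason: the paper does not prove this statement at all --- it is quoted from Barg and Musin \cite{BM11} as a known result. The meaningful comparison is therefore with the machinery the paper does develop for its own bound, Theorem~\ref{thm:1}. There the authors work in $\mathbb{R}[x_1,\dots,x_n]$, attach to each codeword an annihilating polynomial $P_{\vek{u}}$, prove linear independence by evaluating at codewords, and bound $|C|$ by $\dim$ of all polynomials of degree at most $s$, namely $\binom{n+s}{s}$; this needs no hypothesis on the distances but is correspondingly weaker. Your argument differs in two essential ways, and both are exactly what buy the stronger bound. First, by passing to $\{\pm 1\}^n$ and reducing via $u_{a,j}^2=1$, you replace the full degree-$\le s$ polynomial space by the multilinear span \emph{with parity bookkeeping}: $Y_k$ only involves characters $\chi_S(u)=\prod_{j\in S}u_j$ with $|S|\le k$ and $|S|\equiv k\pmod 2$, which eliminates the odd layers below $s$; this structure is invisible in the paper's setup, where all monomials of degree $\le s$ are counted. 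Second, and decisively, you use the identity $\bigl(\prod_i 2d_i\bigr)I=\sum_k c_k Y_k$ not as a linear-independence statement but as an identity of matrices built from positive semidefinite pieces, so the single negative top term $c_{s-1}Y_{s-1}=-\bigl(\sum_i t_i\bigr)Y_{s-1}$ can be transported to the identity side precisely when $\sum_i t_i\ge 0$, i.e.\ $\sum_i d_i\le\frac12 sn$, keeping that side positive definite of rank $|C|$ while the other side has column space of dimension at most $\sum_{i=0}^{s-2}\binom{n}{i}+\binom{n}{s}$. I checked the individual steps --- positive semidefiniteness of the Hadamard powers as Gram matrices of tensor powers, the column-space containments with parity, $p(n)=\prod_i 2d_i$, the Vieta identity $c_{s-1}=-\sum_i t_i$, and the final rank comparison --- and they all hold. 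In effect you have reconstructed the positive-definiteness argument of Barg--Musin itself; the paper's linear-independence method cannot reach this bound because it has no mechanism for exploiting the hypothesis on $\sum_i d_i$, which is precisely the trade-off the authors point out when motivating their restriction-free Theorem~\ref{thm:1}.
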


\begin{theorem}(H. Nozaki, M. Shinohara, 2010)\cite{NS10}
Let $\mathcal{D}=\{d_1,\ldots,d_s\}$ and let
\[f(t)=\prod_i\frac{d_i-t}{d_i}=\sum_{k=0}^s f_k\phi_k(t)\]
where
\[\phi_k(x)=\sum_{j=0}^k (-1)^j{x\choose j}{n-x\choose k-j}, k=0,\ldots,n\]
are the Krawtchouk polynomials. Then
\[A_2(n,\mathcal{D})\le \sum_{k: f_k>0} {n\choose k}.\]
\end{theorem}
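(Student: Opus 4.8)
The plan is to use the polynomial (linear-algebra) method, sharpened by the positive-definiteness of the Krawtchouk evaluations so that only the positive coefficients $f_k$ survive in the dimension count. First I would record the two defining features of $f$: since $f(t)=\prod_i (d_i-t)/d_i$, we have $f(0)=1$ and $f(d_i)=0$ for every $i$. Writing $d(u,v)=\wt(u+v)$ and invoking the standard Hamming-scheme identity $\phi_k(\wt(x))=\sum_{\wt(w)=k}(-1)^{\langle w,x\rangle}$, it follows that for all $u,v$ in the code $C$,
\[
\sum_{k=0}^{s} f_k\,\phi_k(d(u,v))=f(d(u,v))=
\begin{cases}1,&u=v,\\ 0,&u\ne v,\end{cases}
\]
the off-diagonal vanishing being exactly the hypothesis $d(u,v)\in\mathcal D$ for $u\ne v$. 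In matrix language, the $M\times M$ matrix $\bigl(f(d(u,v))\bigr)_{u,v\in C}$ equals the identity $I_M$.

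Next I would isolate the positive part. Put $N=\sum_{k:\,f_k>0}{n\choose k}$ and define the $M\times M$ matrix $P$ with entries $P_{uv}=\sum_{k:\,f_k>0} f_k\,\phi_k(d(u,v))$. Expanding $\phi_k$ into characters exhibits $P=\mathcal F\mathcal F^{\mathsf T}$, where $\mathcal F$ is the real $M\times N$ matrix whose columns are indexed by the words $w$ with $f_{\wt(w)}>0$ and whose entries are $\mathcal F_{u,w}=\sqrt{f_{\wt(w)}}\,(-1)^{\langle w,u\rangle}$; hence $\rank P\le\rank\mathcal F\le N$. On the other hand, subtracting the negative levels from the identity above gives
\[
P_{uv}=f(d(u,v))-\sum_{k:\,f_k<0}f_k\,\phi_k(d(u,v))
=I_{uv}+\sum_{k:\,f_k<0}\abs{f_k}\,\phi_k(d(u,v)),
\]
and the second summand is a nonnegative combination of the rank-one matrices $\bigl((-1)^{\langle w,u\rangle}(-1)^{\langle w,v\rangle}\bigr)_{u,v}$, hence positive semidefinite. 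Therefore $P=I_M+(\text{PSD})$ is positive definite, so $\rank P=M$. Combining the two estimates yields $M=\rank P\le N=\sum_{k:\,f_k>0}{n\choose k}$.

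The routine ingredients are the character expansion of $\phi_k$ and the orthogonality relations of the Hamming scheme. The one genuinely load-bearing step, and the place I would be most careful, is the decomposition $P=I_M+(\text{positive semidefinite})$: it is precisely this identity that upgrades the naive ``all nonzero $f_k$'' dimension count, which would only deliver $\sum_{k:\,f_k\ne0}{n\choose k}$, into the sharper bound involving the positive coefficients alone. I would also verify the harmless boundary cases, namely that $\deg f=s\le n$ so the Krawtchouk expansion is legitimate, and that when $f_0>0$ the single weight-$0$ character is counted correctly in $N$.
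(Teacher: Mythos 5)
Your proof is correct and complete. One thing to be aware of: the paper does not prove this statement at all --- it is quoted as a known background result from \cite{NS10}, so there is no in-paper argument to compare against; what you have reconstructed is essentially the original Nozaki--Shinohara argument. All the key steps check out: the character expansion $\phi_k(d(\vek{u},\vek{v}))=\sum_{\wt(\vek{w})=k}(-1)^{\langle \vek{w},\vek{u}\rangle}(-1)^{\langle \vek{w},\vek{v}\rangle}$ exhibits each level matrix as a sum of rank-one positive semidefinite matrices; writing the positive part as $\mathcal{F}\mathcal{F}^{\mathsf{T}}$ gives $\rank P\le\sum_{k:f_k>0}\binom{n}{k}$; and the decomposition $P=I_M+(\text{PSD})$ forces $\rank P=M$. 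Your two boundary checks are both harmless for the reason you suspect: the $d_i$ are distinct integers in $\{1,\dots,n\}$, so $s\le n$, and since $\phi_k$ has degree exactly $k$ the expansion of $f$ terminates at $k=s$. It is worth contrasting your method with the one the paper actually uses for its own bounds (Theorems~\ref{thm:1} and~\ref{thm:2}): there, each codeword $\vek{u}$ is assigned a polynomial $P_{\vek{u}}(\vek{x})$ of degree $s$, linear independence is proved by evaluating at codewords, and $|C|$ is bounded by the dimension of the space of polynomials of degree at most $s$. That technique requires no hypotheses on the distances but can never do better than a dimension count over all of $V$; your rank-plus-positive-semidefiniteness argument exploits the signs of the Krawtchouk coefficients to discard every level with $f_k<0$, which is exactly the refinement a pure linear-independence count cannot deliver --- at the price that the resulting bound depends on the actual values of the $d_i$ through the sign pattern of the $f_k$, which is why the paper cites this theorem as one of the results that ``require some restrictions on the distances.''
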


Both results require some restrictions on the distances in the 
$\mathcal{D}$. In this note, we prove a bound on $A_2(n,\mathcal{D})$ without
restriction on the distances in $\mathcal{D}$.

\section{The Upper Bound}
\label{sec:2}

Let $C$ be a binary $(n,M,\mathcal{D})$-code with distances
$0<d_1<d_2<\ldots<d_s\le n$.
Consider the ring of polynomials 
$\mathbb{R}[x_1, x_2, \dots, x_n]$ over the field of the real numbers. 
Let us denote $\vek{x}=(x_1, x_2, \dots, x_n)$.
For a vector $\vek{v}=(v_1, v_2, \dots, v_n)$ over 
$\mathbb{R}$, define the polynomial 
\[P_{\vek{v}}(\vek{x})=\prod_{i=1}^s
	\big(d_i-(\vek{x},\vek{1})-(\vek{v},\vek{1})+2(\vek{v},\vek{x}) \big),\]
Here $(\vek{x},\vek{y})=\sum_i x_iy_i$ is the usual dot product 
(over $\mathbb{R}$) of $\vek{x}$ and $\vek{y}$.

Let us note that
\[2(\vek{v},\vek{x})-(\vek{x},\vek{1})=
\pm x_1\pm x_2\pm\cdots\pm x_n,\]
where the sign of $x_i$ is "$+$" if $v_i=1$, and 
"$-$" if $v_i=0$. If we set
\[\sigma_{\vek{v}}(j)=\left\{
\begin{array}{ll}
0 & \text{ if } v_j=1, \\
1 &	\text{ if } v_j=0,
\end{array}\right.\]
we get
\[2(\vek{v},\vek{x})-(\vek{x},\vek{1})=
\sum_{j=1}^n (-1)^{\sigma_{\vek{v}}(j)}x_j.\]
For a vector $\vek{v}$ of Hamming weight $d_k$, i.e. $\wt(\vek{v})=d_k$: 
\begin{equation}\label{eq:pold1}
P_{\vek{v}}(\vek{x})=
\prod_{i=1}^s
\left(d_i-d_k+\sum_j (-1)^{\sigma_{\vek{v}}(j)}x_j \right),
\end{equation}

\begin{lemma}
\label{lma:1}	
Let $\vek{u},\vek{v}\in C$. Then
\[P_{\vek{u}}(\vek{v})=\left\{
\begin{array}{cl}
\prod_{i=1}^s d_i  & \text{ if } \vek{u}=\vek{v}, \\
0                  & \text{ if } \vek{u}\ne\vek{v}.
\end{array}
\right.\]
\end{lemma}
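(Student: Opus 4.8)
The plan is to evaluate $P_{\vek{u}}(\vek{v})$ directly by substituting $\vek{x}=\vek{v}$ into the defining product and interpreting the linear form inside each factor in terms of the Hamming distance $d(\vek{u},\vek{v})$. The key observation is that for $\vek{u},\vek{v}\in\F_2^n$ the quantity appearing in each factor, namely $d_i-(\vek{v},\vek{1})-(\vek{u},\vek{1})+2(\vek{u},\vek{v})$, should collapse to $d_i-d(\vek{u},\vek{v})$. First I would establish this identity: since all coordinates are $0$ or $1$, we have $(\vek{v},\vek{1})=\wt(\vek{v})$, $(\vek{u},\vek{1})=\wt(\vek{u})$, and $(\vek{u},\vek{v})$ counts the coordinates where both are $1$. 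Then the standard relation $d(\vek{u},\vek{v})=\wt(\vek{u})+\wt(\vek{v})-2(\vek{u},\vek{v})$ gives exactly $-(\vek{v},\vek{1})-(\vek{u},\vek{1})+2(\vek{u},\vek{v})=-d(\vek{u},\vek{v})$, so each factor equals $d_i-d(\vek{u},\vek{v})$.

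With that identity in hand, the two cases follow immediately. If $\vek{u}=\vek{v}$ then $d(\vek{u},\vek{v})=0$, every factor reduces to $d_i$, and the product is $\prod_{i=1}^s d_i$. If $\vek{u}\ne\vek{v}$ then, because $\vek{u},\vek{v}$ are distinct codewords of $C$, their distance $d(\vek{u},\vek{v})$ lies in the set $\mathcal{D}=\{d_1,\ldots,d_s\}$; hence $d(\vek{u},\vek{v})=d_k$ for some $k$, and the $k$-th factor $d_k-d(\vek{u},\vek{v})$ vanishes, forcing the whole product to be $0$.

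The verification of the arithmetic identity is genuinely routine, so I do not expect a serious obstacle here; the only point that requires a little care is making explicit that we are working with $0/1$-valued coordinates so that dot products with $\vek{1}$ really do compute Hamming weights and $(\vek{u},\vek{v})$ computes the overlap. It is worth noting that the lemma uses the code property only in the second case, where membership of both words in $C$ guarantees that their distance is one of the prescribed values $d_i$; the first case is purely formal. This lemma is the crux of the whole argument, since it shows that the family $\{P_{\vek{u}}\}_{\vek{u}\in C}$ behaves like a system of functions that is diagonal on $C$, which is exactly what is needed to prove their linear independence and thereby bound $M=\abs{C}$ by the dimension of the relevant space of polynomials.
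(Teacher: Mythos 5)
Your proof is correct and takes essentially the same route as the paper: substitute $\vek{x}=\vek{v}$, use the identity $d(\vek{u},\vek{v})=\wt(\vek{u})+\wt(\vek{v})-2(\vek{u},\vek{v})$ (the paper writes the overlap as $\wt(\vek{u}*\vek{v})$, which equals your $(\vek{u},\vek{v})$ for $0/1$ vectors) to collapse each factor to $d_i-d(\vek{u},\vek{v})$, and then split into the two cases exactly as the paper does.
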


\begin{proof}
As usual, for the vectors $\vek{u}=(u_1,\cdots,u_n)$ and $\vek{v}=(v_1,\cdots,v_n)$,
we define $\vek{u}*\vek{v}=(u_1v_1,\cdots,u_nv_n)$.
Then we have
\begin{eqnarray*}
P_{\vek{u}}(\vek{u}) &=& \prod_{i=1}^s (d_i-(\vek{u},\vek{1})-(\vek{u},\vek{1})+
2(\vek{u},\vek{u})) \\	
 &=& \prod_{i=1}^s (d_i-\wt(\vek{u})-\wt(\vek{u})+2\wt(\vek{u})) \\
 &=& \prod_{i=1}^s d_i,
\end{eqnarray*}
and
\begin{eqnarray*}
	P_{\vek{u}}(\vek{v}) &=& \prod_{i=1}^s (d_i-(\vek{u},\vek{1})-(\vek{v},\vek{1})+
	2(\vek{u},\vek{v}) \\	
	&=& \prod_{i=1}^s (d_i-\wt(\vek{u})-\wt(\vek{v})+2\wt(\vek{u}*\vek{v})) \\
	&=& \prod_{i=1}^s (d_i-d(\vek{u},\vek{v})).
\end{eqnarray*}
Since $d(\vek{u},\vek{v})=d_i$ for some $i\in\{1,\cdots,s\}$ we have
$P_{\vek{u}}(\vek{v})=0$.
\end{proof}

\begin{lemma}
\label{lma:2}
Let $C$ be a binary code with distances $0<d_1,\dots<d_s\le n$.
The polynomials in the set
\[\{P_{\vek{u}}(\vek{x})\mid \vek{u}\in C\}\]
are linearly independent.
\end{lemma}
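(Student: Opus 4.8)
The standard way to prove linear independence of a family of polynomials that comes equipped with a "dual" evaluation structure is to use Lemma~\ref{lma:1} as an orthogonality-type relation. That is exactly the situation here: the collection $\{P_{\vek u}\}_{\vek u\in C}$ is indexed by the codewords, and evaluation at a codeword $\vek v\in C$ picks out the index $\vek u=\vek v$ (returning the nonzero constant $\prod_i d_i$) and kills every other index (returning $0$).

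So the plan is as follows. Suppose we have a real linear combination
\[
\sum_{\vek u\in C}\lambda_{\vek u}\,P_{\vek u}(\vek x)=0
\]
as a polynomial identity in $\mathbb R[x_1,\dots,x_n]$, with coefficients $\lambda_{\vek u}\in\mathbb R$. I want to show every $\lambda_{\vek u}=0$. Since the identity holds as polynomials, it holds under evaluation at any point of $\mathbb R^n$, in particular at any codeword $\vek v\in C\subset\mathbb F_2^n\subset\mathbb R^n$.

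Fix an arbitrary $\vek v\in C$ and substitute $\vek x=\vek v$ into the identity. By Lemma~\ref{lma:1}, the term indexed by $\vek u=\vek v$ contributes $\lambda_{\vek v}\prod_{i=1}^s d_i$, while every term with $\vek u\ne\vek v$ contributes $\lambda_{\vek u}\cdot 0=0$. Hence
\[
0=\sum_{\vek u\in C}\lambda_{\vek u}\,P_{\vek u}(\vek v)
 =\lambda_{\vek v}\prod_{i=1}^s d_i.
\]
Because the distances satisfy $0<d_1<\dots<d_s\le n$, each $d_i$ is a positive integer, so $\prod_{i=1}^s d_i\ne 0$; dividing gives $\lambda_{\vek v}=0$. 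Since $\vek v\in C$ was arbitrary, all coefficients vanish, which is precisely linear independence of $\{P_{\vek u}\mid\vek u\in C\}$.

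There is essentially no hard step here once Lemma~\ref{lma:1} is in hand — the whole argument is the classical "evaluate at the dual points" trick, and the only thing one must check is that the constant $\prod_i d_i$ is nonzero, which is immediate from $d_i\ge d_1>0$. The one subtlety worth being explicit about is that the vanishing of the combination must be read as a \emph{polynomial} identity (not merely agreement on $C$), so that substituting the finitely many real points $\vek v\in C$ is legitimate; this is automatic since linear dependence of polynomials means the formal combination is the zero polynomial. I expect the author's proof to be a short paragraph of exactly this shape.
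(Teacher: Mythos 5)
Your proof is correct and is essentially identical to the paper's: both assume a vanishing linear combination, evaluate at each codeword $\vek v\in C$, and invoke Lemma~\ref{lma:1} to isolate $c_{\vek v}\prod_{i=1}^s d_i=0$, forcing all coefficients to vanish. Your extra remark about reading the dependence as a polynomial identity is a fine clarification but not a departure from the paper's argument.
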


\begin{proof}
Assume that 
$\displaystyle \sum_{\vek{u}\in C} c_{\vek{u}}P_{\vek{u}}(\vek{x})=0$ for some  real numbers $c_{\vek{u}}$,
$\vek{u}\in C$. Plug in $\vek{x}=\vek{v}$ for an arbitrary codeword $\vek{v}\in C$.
By Lemma~\ref{lma:2}, we get
\[0=\sum_{\vek{u}\in C} c_{\vek{u}}P_{\vek{u}}(\vek{v})=c_{\vek{v}}P_{\vek{v}}(\vek{v})=
c_{\vek{v}}\prod_{i=1}^s d_i.\]
This implies $c_{\vek{v}}=0$ for all $\vek{v}\in C$, which proves the Lemma.
\end{proof}

\begin{theorem}
\label{thm:1}
Let $C\subset\{0,1\}^n$ be a binary code with distances $0<d_1<\cdots<d_s\le n$. Then
\[|C|\le {n+s\choose s}.\] 
\end{theorem}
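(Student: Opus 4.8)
The plan is to combine Lemma~\ref{lma:2} with a dimension count, in the spirit of the linear-algebra (polynomial) method. The key observation is that Lemma~\ref{lma:2} already does the hard work: it exhibits $|C|$ polynomials $P_{\vek{u}}(\vek{x})$, $\vek{u}\in C$, that are linearly independent in $\mathbb{R}[x_1,\dots,x_n]$. Hence $|C|$ cannot exceed the dimension of any subspace of $\mathbb{R}[x_1,\dots,x_n]$ that contains all of these polynomials. So the whole task reduces to pinning down a small ambient space in which every $P_{\vek{u}}$ lives, and computing its dimension.

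First I would bound the degree of each $P_{\vek{u}}$. By its defining formula, $P_{\vek{u}}(\vek{x})$ is a product of $s$ factors, each of which is affine-linear in $\vek{x}$ (a constant plus a signed sum of the variables, as made explicit in the rewriting \eqref{eq:pold1}). Therefore $\deg P_{\vek{u}}\le s$, and consequently every $P_{\vek{u}}$ belongs to the vector space
\[
V=\{\,f\in\mathbb{R}[x_1,\dots,x_n]\mid \deg f\le s\,\}.
\]
Next I would compute $\dim V$ by counting monomials $x_1^{a_1}\cdots x_n^{a_n}$ with $a_1+\dots+a_n\le s$. Introducing a slack index turns this into counting the weak compositions of $s$ into $n+1$ parts, which gives $\dim V=\binom{n+s}{s}$. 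Combining $|C|\le\dim V$ with this count yields the claimed bound $|C|\le\binom{n+s}{s}$.

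I do not expect a genuine obstacle here; the argument is short once the degree bound and the monomial count are in place. The only point requiring care is to make sure the ambient space is chosen correctly: one should take the full space of polynomials of total degree at most $s$ (not the multilinear reduction), since the dimension $\binom{n+s}{s}$ counts all monomials of degree $\le s$ rather than only the squarefree ones. It is worth remarking that, because the codewords lie in $\{0,1\}^n$, one could replace each $x_i^2$ by $x_i$ and work in the multilinear subspace, which would sharpen the bound to $\sum_{i=0}^{s}\binom{n}{i}$; the theorem as stated, however, follows immediately from the plain degree count above.
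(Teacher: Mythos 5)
Your proof is correct and follows essentially the same route as the paper: linear independence of the $P_{\vek{u}}$ from Lemma~\ref{lma:2}, the degree bound $\deg P_{\vek{u}}\le s$ placing them in the space $V$ of polynomials of degree at most $s$, and the monomial count $\dim V=\binom{n+s}{s}$. Your closing remark about reducing modulo $x_i^2-x_i$ to get the sharper bound $\sum_{i=0}^{s}\binom{n}{i}$ is also valid (the reduction preserves values on $\{0,1\}^n$, so the linear independence argument still applies) and in fact anticipates the spirit of the paper's Theorem~\ref{thm:2}, which improves the bound by exploiting structure among the even-degree monomials.
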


\begin{proof}
Let $V$ be the subspace of $\mathbb{R}[x_1,\cdots,x_n]$ of all polynomials of degree at most $s$, and $U$ -- the subspace spanned by 
$\{P_{\vek{u}}(\vek{x})\mid \vek{v}\in C\}$. Clearly $U<V$, whence 
$\dim U\le\dim V$.

By Lemma~\ref{lma:2}, $\dim U=|C|$. On the other hand, $V$ is spanned
by the monomials $x_1^{\alpha_1}x_2^{\alpha_2}\ldots x_n^{\alpha_n}$, where
	$\alpha_1+\alpha_2+\cdots+\alpha_n\le s$
The number of the monomials of degree $j$ is equal to the number of the solutions of
$y_1+y_2+\cdots+y_n=j$ in non-negative integers which is 
${n+j-1\choose j}$. Thus
\[\dim V=\sum_{j=0}^s {n+j-1\choose j}={n+s\choose s}.\]
Now 
\[|C|=\dim U\le\dim V={n+s\choose s}.\]
\end{proof}

I should be noted that this bound has been proved in \cite{Bannai-et-al,Blok82,Blok83} for $s$-distance sets on the Euclidean sphere. This result then easily implies Theorem~\ref{thm:1}. In the next section, we give an improvement on the bound from 
Theorem~\ref{thm:1}. 

Theorem~\ref{thm:1} says that if $C$ is a code with $s$ distances 
then $|C|=O(n^s)$.
It has to be noted that this bound is asymptotically tight. Consider the code $C$ consisting of all words of Hamming weight $s$. If $\vek{u}$ and $\vek{v}$ 
are two words from $C$ with $wt(\vek{u}*\vek{v})=i$, 
then $d(\vek{u},\vek{v})=2(s-i)$, where $i\in\{0,\cdots,n-1\}$.
Hence $C$ is a code with $s$ distances: $2,4,\cdots,2s$. 
If $s$ is even the zero word can be adjoined to $C$.
In many cases this construction is optimal, as in the case of two weights
(cf. \cite{Barg-et-al,BDZZ21,BDZZ22,LRV23}).

Note that the blocks of every design with $s$ intersection numbers 
form a code with $s$ intersection numbers. In particular, quasi-symmetric designs give 
rise to codes with two distances. If a 2-distance code has the additional property that there is a word at equal distance from all codewords then the construction described above is the best possible. The following theorem is well-known \cite{CvL80} (Proposition 3.5).

\begin{theorem}
If a quasi-symmetric design does not contain repeated blocks then
\[b\le\frac{v(v-1)}{2}.\]
\end{theorem}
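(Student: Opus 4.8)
The plan is to recognise the blocks of the design as a constant-weight two-distance code and then to sharpen the counting argument behind Theorem~\ref{thm:1} by exploiting that constant weight. Let the design be a $2$-$(v,k,\lambda)$ design with block intersection numbers $x<y$; since it is a $2$-design every block has the same size $k$, and since there are no repeated blocks the $b$ characteristic vectors $\chi_B\in\{0,1\}^v$ are pairwise distinct. For blocks $B\neq B'$ one has $d(\chi_B,\chi_{B'})=2(k-\abs{B\cap B'})\in\{2(k-y),2(k-x)\}$, so the family $C=\{\chi_B\}$ is a binary code of length $v$ and constant weight $k$ with the two distances $d_1=2(k-y)$ and $d_2=2(k-x)$. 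Thus $b=\abs{C}$, and Theorem~\ref{thm:1} already yields $b\le\binom{v+2}{2}$; the task is to remove the surplus $v+1$ dimensions and reach $\binom{v}{2}$.

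First I would attach to each block the degree-two polynomial $P_{\chi_B}(\vek{x})$ of \eqref{eq:pold1} and restrict attention to the ``slice'' $X=\{\vek{x}\in\{0,1\}^v:\wt(\vek{x})=k\}$, which contains all codewords. By Lemma~\ref{lma:1}, $P_{\chi_B}(\chi_{B'})$ equals $d_1d_2$ when $B=B'$ and $0$ otherwise, and the evaluation argument of Lemma~\ref{lma:2} — which only ever substitutes codewords, all of which lie in $X$ — shows that the restrictions $P_{\chi_B}|_X$ remain linearly independent. Hence $b$ equals the dimension of the space $U$ they span inside the real functions on $X$, and it remains to bound that dimension.

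The heart of the proof is to show that on $X$ every $P_{\chi_B}|_X$ lies in the span of the $\binom{v}{2}$ products $x_ix_j$ with $i<j$. On the slice one has $\sum_i x_i=k$ and $x_i^2=x_i$, so writing $t=(\chi_B,\vek{x})$ and expanding $t^2=\sum_i (\chi_B)_i x_i+2\sum_{i<j}(\chi_B)_i(\chi_B)_j x_ix_j$ exhibits $P_{\chi_B}|_X$ as a real combination of the constant $1$, the linear monomials $x_i$, and the quadratic monomials $x_ix_j$. These three families are not independent on $X$: multiplying $\sum_j x_j=k$ by $x_i$ and using $x_i^2=x_i$ gives the identities $\sum_{j\neq i}x_ix_j=(k-1)x_i$, valid on $X$, and since a $2$-design has $k\ge2$ these express each $x_i$ — and hence also $1=\tfrac1k\sum_i x_i$ — as linear combinations of the $x_ix_j$. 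Consequently the restrictions of $1$, of every $x_i$, and of every $x_ix_j$ to $X$ all lie in the $\binom{v}{2}$-dimensional span of $\{\,x_ix_j|_X : i<j\,\}$, so $\dim U\le\binom{v}{2}$ and therefore $b\le\binom{v}{2}=\tfrac{v(v-1)}{2}$.

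I expect the main obstacle to be precisely this last reduction: Theorem~\ref{thm:1} out of the box delivers only $\binom{v+2}{2}$, and squeezing out the extra dimensions rests entirely on the constant-weight condition through the relations $\sum_{j\neq i}x_ix_j=(k-1)x_i$. One must also dispose of the harmless degenerate cases — checking that $k\ge2$ so that the passage $x_i=\tfrac{1}{k-1}\sum_{j\neq i}x_ix_j$ is legitimate, and noting that if only one intersection number actually occurs the code is equidistant and the even stronger bound $b\le v$ holds a fortiori.
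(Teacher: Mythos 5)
Your proof is correct, but there is nothing in the paper to compare it against: the paper does not prove this theorem at all, it quotes it as well-known and cites \cite{CvL80} (Proposition 3.5). What you have produced is a genuine, self-contained proof that stays inside the paper's own polynomial framework, and both of its key moves check out: (i) restricting the degree-two polynomials $P_{\chi_B}$ of (\ref{eq:pold1}) to the slice $X=\{\vek{x}\in\{0,1\}^v:\wt(\vek{x})=k\}$ preserves their linear independence, because the evaluation argument of Lemma~\ref{lma:2} only ever substitutes codewords, all of which lie in $X$ (and $d_1d_2\ne0$ since the absence of repeated blocks forces both intersection numbers to be smaller than $k$); (ii) on $X$ the identities $x_i^2=x_i$, $\sum_i x_i=k$, and hence $\sum_{j\ne i}x_ix_j=(k-1)x_i$ collapse the constant and linear parts of each $P_{\chi_B}|_X$ into the span of the $\binom{v}{2}$ monomials $x_ix_j$, $i<j$, which is exactly how the constant weight buys the drop from $\binom{v+2}{2}$ (Theorem~\ref{thm:1}) to $\binom{v}{2}$. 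It is worth noting what each route buys. Your argument never uses the $2$-design property --- it only needs that the blocks are $b$ distinct $k$-subsets of a $v$-set whose pairwise intersection sizes take at most two values --- so you have in fact proved the $s=2$ case of the Ray--Chaudhuri--Wilson theorem, which is strictly more general than the quoted statement; it also meshes naturally with the paper's Section 3, since it identifies concretely which dimensions of $V$ become redundant for constant-weight codes. The design-theoretic treatment in \cite{CvL80}, by contrast, exploits the design structure and yields more in that setting, e.g.\ the companion result (Proposition 3.6) that equality forces a $4$-design, which your method does not give. One loose end: your dismissal of the degenerate single-intersection-number case invokes $b\le v$ \emph{a fortiori}, but that bound is a separate classical fact (the Gram matrix $(k-\lambda)I+\lambda J$ of the characteristic vectors is nonsingular when $\lambda<k$, so the vectors are independent in $\mathbb{R}^v$); it does not follow from anything you proved. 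Since a quasi-symmetric design has, by definition, two intersection numbers that both occur, this case is vacuous anyway, so the gap is cosmetic.
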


If we exclude the trivial examples, we have the following strengthening \cite{CvL80}
(Proposition 3.6).

\begin{theorem}
Let $\mathcal{D}$ be a quasi-symmetric design without repeated blocks and with $4\le k\le v-4$. Then $\displaystyle b\le {v\choose 2}$ with equality 
if and only if $\mathcal{D}$ is a 4-design.
\end{theorem}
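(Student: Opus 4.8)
The plan is to restate the assertion in terms of constant-weight two-distance codes and then apply the polynomial method of Section~\ref{sec:2}, sharpened by the constant block size, reserving the real work for the equality clause. Write $v$ for the number of points and identify each block $B$ with its characteristic vector $\vek b\in\{0,1\}^v$; then $\wt(\vek b)=k$ for every block, and distinct blocks give distinct vectors because $\mathcal D$ has no repeated blocks. If $x<y$ are the two intersection numbers, then two distinct blocks lie at Hamming distance $2(k-y)$ or $2(k-x)$, both positive, so $C=\{\vek b:B\in\mathcal D\}$ is a binary two-distance code of constant weight $k$ with $|C|=b$ and $s=2$.

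For the inequality I would run the argument behind Lemma~\ref{lma:2} and Theorem~\ref{thm:1}, but restricted to the set $S=\{\vek x\in\{0,1\}^v:\wt(\vek x)=k\}\supseteq C$. On $S$ one has $x_i^2=x_i$, so every polynomial reduces to a multilinear one, and $\sum_i x_i=k$. Since every codeword has weight $k$, the definition of $P_{\vek b}$ shows that on $S$ it is a quadratic expression in the single linear form $\sum_{i\in B}x_i$; after reduction it becomes a combination of the squarefree monomials $x_ix_j$, the linear monomials $x_i$, and the constant. The relations $\sum_i x_i=k$ and $x_j\sum_i x_i=kx_j$ (one for each $j$) give $v+1$ linearly independent relations among these monomials, so the space $W$ of degree-$\le2$ functions on $S$ has $\dim W\le 1+\binom v1+\binom v2-(v+1)=\binom v2$. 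The separating evaluations used in Lemma~\ref{lma:2} are taken at points of $C\subseteq S$, so the $P_{\vek b}|_S$ remain linearly independent; hence $b=|C|\le\dim W\le\binom v2$.

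For the equality clause I would show that $b=\binom v2$ forces $\dim W=\binom v2$ and makes $\{P_{\vek b}|_S\}$ a basis of $W$, and then pass to the Johnson scheme $J(v,k)$. Its eigenspaces $V_0,V_1,\dots$ satisfy $W=V_0\oplus V_1\oplus V_2$, with $\dim V_1=v-1$ and $\dim V_2=\binom v2-v$ once $2\le k\le v-2$; thus $C$ becomes a \emph{tight} two-distance set. By Delsarte's theory of designs and codes in the ($Q$-polynomial) Johnson scheme, a two-distance set attaining $|C|=\binom v2$ is exactly a tight $4$-design: tightness forces the inner distribution of $C$ to annihilate the Eberlein polynomials of the scheme up to degree $4$, and this is precisely the statement that every $4$-subset of points lies in a constant number of blocks, i.e.\ that $\mathcal D$ is a $4$-design. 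The hypothesis $4\le k\le v-4$ enters to guarantee nondegeneracy: it makes the multiplicities $\dim V_1,\dim V_2$ equal to their generic values (so that $\dim W=\binom v2$ genuinely holds) and excludes the small designs for which the characterization fails.

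I expect the main obstacle to be the implication $b=\binom v2\Rightarrow\mathcal D$ is a $4$-design. The inequality and the reverse implication are essentially bookkeeping with the polynomials $P_{\vek b}$ and with inclusion matrices, but upgrading the design strength from $2$ to $4$ out of the mere tightness of a two-distance set needs the association-scheme machinery (or an equivalent direct computation with the higher point--block inclusion matrices $W_3$ and $W_4$), and this is where the argument of \cite{CvL80} is concentrated.
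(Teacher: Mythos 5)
You should first know that the paper does not prove this statement at all: it is quoted as a known result, Proposition 3.6 of Cameron--van Lint \cite{CvL80}, immediately after Proposition 3.5 (which is the same bound without the equality clause and without the condition on $k$). So there is no proof in the paper to compare against, and your attempt has to be judged on its own merits.

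Your first half stands up. Identifying blocks with their characteristic vectors gives a constant-weight two-distance code $C\subseteq\{0,1\}^v$ with $|C|=b$, and restricting the paper's polynomial method to the slice $S=\{\vek x\in\{0,1\}^v:\wt(\vek x)=k\}$ is carried out correctly: the polynomials $\sum_i x_i-k$ and $x_j\sum_i x_i-kx_j$ (reduced multilinearly) are $v+1$ linearly independent relations vanishing on $S$, so the space $W$ of degree-$\le 2$ functions on $S$ has dimension at most $1+v+\binom{v}{2}-(v+1)=\binom{v}{2}$; and since the separating evaluations of Lemma~\ref{lma:2} take place at points of $C\subseteq S$, the restrictions $P_{\vek b}|_S$ remain independent in $W$. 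This yields $b\le\binom{v}{2}$ and is a genuine sharpening of the mechanism of Theorem~\ref{thm:1}, which by itself would only give $\binom{v+2}{2}$.

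The gap is the equality clause, which is the entire content of this theorem beyond the preceding one. For the implication $b=\binom{v}{2}\Rightarrow\mathcal D$ is a $4$-design, you invoke ``Delsarte's theory: a two-distance set attaining $\binom{v}{2}$ is exactly a tight $4$-design.'' But that statement, specialized to the Johnson scheme $J(v,k)$ with $s=2$, \emph{is} the theorem being proved; citing it is circular, and you concede as much when you say this is where the argument of \cite{CvL80} is concentrated. A complete proof must actually extract the design property from tightness: starting from the fact that the $\binom{v}{2}$ functions $P_{\vek b}|_S$ form a basis of $W$, one has to show (via the harmonic decomposition of $W$, or equivalently via the inclusion matrices $W_2,W_3,W_4$) that every $4$-subset of points lies in a constant number of blocks; none of that computation is present. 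Two further inaccuracies: the Delsarte design condition is the vanishing of the \emph{dual} (Hahn, $Q$-polynomial) transform of the inner distribution, not of ``the Eberlein polynomials,'' which are the $P$-eigenvalues of the scheme; and the converse direction ($4$-design $\Rightarrow b=\binom{v}{2}$) is not mere bookkeeping either --- it requires the Ray-Chaudhuri--Wilson inequality $b\ge\binom{v}{2}$ for $4$-designs, i.e.\ the rank argument for the inclusion matrix of pairs versus blocks, which is only gestured at. In summary: you have a correct, self-contained proof of the inequality, but for the characterization of equality your proposal re-cites the literature at exactly the point where the theorem's substance lies --- which is, in fairness, precisely what the paper itself does.
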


In fact, up to taking the complement, there exists exactly one design for which we have equality in the above theorem. This is the classical 4-$(23,7,1)$-design with intersection numbers 1  and 3. Its blocks form a $(23,253,\{8,12\})$-code.

For codes $C$ of length $n$ with $s$ distances, $s>2$, it is expected that
$|C|\le{n\choose s}$ or ${n\choose s}+1$ (depending on whether $s$ is odd or even)
with equality achieved only for the trivial construction.

\section{An Improvement on the Upper Bound}

The bound from Theorem~\ref{thm:2} can be improved by inspecting more closely the dimension of the vector space $U$. In what follows, let $C$ be a binary code of length $n$ with $s$ distances: $0<d_1<\cdots<d_s\le n$. 

Consider the coefficient of the monomial 
$x_1^{2\alpha_1}x_2^{2\alpha_2}\cdots x_n^{2\alpha_n}$
in the polynomial $P_{\vek{v}}(\vek{x})$ where $\wt(\vek{v})=d_k$ for some $k$. 
If $t=\sum_i\alpha_i$, then $2t\le s$. 
From (\ref{eq:pold1}) we can deduce that this coefficient is equal to
\[{2t\choose 2\alpha_1}{2t-2\alpha_1\choose 2\alpha_2}\cdots{2t-2\alpha_1-\cdots-2\alpha_{n-1}\choose 2\alpha_n}
\sum_{\{i_1,\cdots,i_{s-2t}\}}(d_{i_1}-d_k)(d_{i_2}-d_k)\ldots(d_{i_{s-2t}-d_k})\]
where the sum is taken over all $(s-2t)$-tuples of indices $i_1,\cdots,i_{s-2t}$
contained in $\{1,\cdots,s\}$. Thus the coefficients of 
$x_{\sigma(1)}^{2\alpha_1}x_{\sigma(2)}^{2\alpha_2}\cdots x_{\sigma(n)}^{2\alpha_n}$ are the same for all permutations $\sigma$ of $\{1,\cdots,n\}$.

This implies the following improvement for the bound of Theorem~\ref{thm:1}.

\begin{theorem}
	\label{thm:2}
	For every binary code $C$ of length $n$ with $s$ distances, it holds
	\[|C|\le {n+s\choose s}-\sum_{t:2t\le s} \left({t+n-1\choose t}-p(t)\right).\]
	where $p(t)$ is the number of the partitions of $t$.	
\end{theorem}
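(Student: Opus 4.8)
The plan is to sharpen the dimension count used for Theorem~\ref{thm:1} by replacing $V$ with a strictly smaller subspace that still contains $U$. First I would recall the two ingredients already in hand: by Lemma~\ref{lma:2} we have $\dim U = |C|$, and trivially $U \subseteq V$, the space of all polynomials of degree at most $s$, with $\dim V = \binom{n+s}{s}$. The whole improvement rests on the permutation symmetry of coefficients noted just before the statement: each $P_{\vek{v}}$ lives in a proper subspace of $V$, and hence so does their span $U$.

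To make this precise I would read off, from the displayed formula preceding the theorem, the coefficient of a fully even monomial $x_1^{2\alpha_1}\cdots x_n^{2\alpha_n}$ in $P_{\vek{v}}$, where $\wt(\vek{v}) = d_k$ and $t = \sum_i \alpha_i$ with $2t \le s$. This coefficient factors as the multinomial coefficient $\binom{2t}{2\alpha_1,\ldots,2\alpha_n}$, which depends only on the multiset $\{\alpha_1,\ldots,\alpha_n\}$, times the scalar $\sum_{\{i_1,\ldots,i_{s-2t}\}}(d_{i_1}-d_k)\cdots(d_{i_{s-2t}}-d_k)$, which depends only on $t$ and $k$ and not on the individual $\alpha_i$. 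Consequently, for each fixed $\vek{v}\in C$, the coefficients attached to all monomials in one $S_n$-orbit of exponent vectors coincide. I would then let $V'$ be the subspace of $V$ cut out by the linear conditions that, on each $S_n$-orbit of fully even monomials of degree at most $s$, the coefficients are equal. This is a genuine subspace, it contains every $P_{\vek{v}}$, and since the defining conditions are stable under linear combinations we obtain $U \subseteq V'$.

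It then remains to compute $\dim V'$, equivalently its codimension in $V$. I would stratify the fully even monomials by half-degree $t$, where $2t \le s$. For fixed $t$ there are $\binom{t+n-1}{t}$ such monomials, and they partition into $S_n$-orbits indexed by partitions of $t$ into at most $n$ parts; since $t \le s/2 \le n/2 < n$, this count is exactly $p(t)$. Forcing equality of coefficients within an orbit of size $m$ imposes $m-1$ independent linear conditions, and distinct orbits (at the same or different $t$) involve pairwise disjoint sets of coordinates, so all these conditions are jointly independent. Summing $|O|-1$ over every orbit $O$ gives total codimension $\sum_{t:\,2t\le s}\big(\binom{t+n-1}{t}-p(t)\big)$, whence $\dim V' = \binom{n+s}{s} - \sum_{t:\,2t\le s}\big(\binom{t+n-1}{t}-p(t)\big)$. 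Finally, $|C| = \dim U \le \dim V'$ delivers the asserted inequality.

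The step I expect to be the main obstacle is the independence bookkeeping in the last paragraph: one must be sure the orbit-equality conditions are truly independent and that the number of orbits at level $t$ is correctly $p(t)$. The disjointness of the coordinate sets across orbits settles independence cleanly, and the only delicate point is verifying $t \le n$, so that partitions of $t$ into at most $n$ parts account for all partitions of $t$; this is guaranteed because the $s$ distances lie in $\{1,\ldots,n\}$, forcing $s \le n$ and hence $t \le s/2 \le n/2 < n$.
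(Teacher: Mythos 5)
Your proof is correct, and it hinges on the same key fact as the paper's: in every $P_{\vek{v}}$ the coefficient of a fully even monomial $x_1^{2\alpha_1}\cdots x_n^{2\alpha_n}$ depends on $(\alpha_1,\dots,\alpha_n)$ only through its multiset, so these coefficients are constant along $S_n$-orbits. The linear-algebra packaging, however, is genuinely different, and yours is the cleaner of the two. The paper decomposes $V=V_1\oplus V_2$ ($V_1$ spanned by the fully even monomials, $V_2$ by the rest), sets $U_i=U\cap V_i$, and computes $\dim U=\dim U_1+\dim U_2$; as literally written this is a gap, since for a subspace $U$ of a direct sum one only has $\dim U\ge\dim(U\cap V_1)+\dim(U\cap V_2)$ in general, and the step must be repaired by replacing intersections with projections, using $\dim U\le\dim\pi_1(U)+\dim\pi_2(U)$ together with the fact that each $\pi_1(P_{\vek{v}})$ lies in the span of the orbit sums. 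Your route sidesteps this entirely: you cut out a single subspace $V'$ by the linear conditions ``coefficients agree on each even orbit,'' observe $P_{\vek{v}}\in V'$ for every codeword and hence $U\subseteq V'$ by linearity, and then count independent difference functionals orbit by orbit to get the codimension. The two computations agree, since your $V'$ is exactly $(\text{span of orbit sums})\oplus V_2$, so the numerical bound is identical; what your formulation buys is that no repair is needed, the independence of the constraints is transparent (disjoint coordinate supports across orbits), and you explicitly verify the point the paper glosses over, namely that the number of orbits at level $t$ is exactly $p(t)$ because $t\le s/2\le n/2$ forces every partition of $t$ to have at most $n$ parts. One shared cosmetic imprecision you inherited from the paper: the coefficient formula is stated under the assumption $\wt(\vek{v})=d_k$, but codeword weights need not be among the $d_i$; this is harmless, since the same expansion and the same orbit-symmetry hold with $d_k$ replaced by $\wt(\vek{v})$ throughout.
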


\begin{proof}
Two solutions $(\alpha_i)$ and $(\beta_i)$
of the equation 
\begin{equation}
\label{eq:eq1}
y_1+\cdots+y_n=t, \;\;y_i\ge0
\end{equation}
are said to be equivalent if one of them is a permutation of the other,
i.e. $\beta_=\alpha_{\sigma(i)}$ for some permutation $\sigma$.
The number of the equivalent classes is equal to $p(t)$, the number of partitions of $t$.

From our preliminary observation before the theorem
we have that for the equivalent solutions $(\alpha_i)$
and $(\beta_i)$ the coefficients of
$x_1^{2\alpha_1}x_2^{2\alpha_2}\cdots x_n^{2\alpha_n}$
and $x_1^{2\beta_1}x_2^{2\beta_2}\cdots x_n^{2\beta_n}$
are equal in every polynomial $P_{\vek{v}}(\vek{x})$.	

Let $U$ and $V$ be the same as in Theorem~\ref{thm:2}. 
We have $V=V_1\oplus V_2$, where
\begin{eqnarray*}
V_1 &=& \langle x_1^{\alpha_1}\cdots x_n^{\alpha_n} \mid \alpha_i \text{ even for all } i\rangle, \\
V_2 &=&	\langle x_1^{\beta_1}\cdots x_n^{\beta_n} \mid \text{ at least one } \beta_i \text{ odd } \rangle
\end{eqnarray*}
Let $U_i=V_i\cap U$. We have proved that
we have that
\[U_1\subseteq\langle \sum_{\alpha} \vek{x}^{\alpha} \ \alpha \text{ runs over all classes
	of equivalent solutions of } (\ref{eq:eq1}) \rangle.\]
Now we have
\begin{eqnarray*}
\dim V_2 &=& \dim V - \dim V_1 \\
         &=& {n+s\choose s} - \sum_{t:t\le s/2} \#(\text{solutions of } (\ref{eq:eq1}))\\
         &=& {n+s\choose s} - \sum_{t:t\le s/2} {t+n-1\choose t}.
\end{eqnarray*}
On the other hand, 
\[\dim V_1\le\sum_{t:t\le s/2} \#(\text{equivalence classes of solutions of }(\ref{eq:eq1}))
=\sum_{t:t\le s/2} p(t).\]
Now
\begin{eqnarray*}
\dim U &=& \dim U_1+\dim U_2 \\
       &\le& \dim V_1+\dim V_2 \\
       &=&  \sum_{t:t\le s/2} p(t) + {n+s\choose s} - \sum_{t:t\le s/2} {t+n-1\choose t}
\end{eqnarray*}
which was to be proved.
\end{proof}

This bound is also not sharp. It is expected that the largest codes with $s$ distances are isomorphic to those  of size $\displaystyle {n\choose s}$ or 
$\displaystyle {n\choose s}+1$ described in section~\ref{sec:2}.

\noindent
{\sl Institute of Mathematics and Informatics}\\
{\sl Bulgarian Academy of Sciences}\\
{\sl 8 Acad. G. Bonchev str.} \\
{\sl 1113 Sofia, Bulgaria}\\
{\sl e-mail:} \texttt{ivan@math.bas.bg}

%\noindent
%{\sl New Bulgarian University}\\
%{\sl 21 Montevideo str.} \\
%{\sl 1618 Sofia, Bulgaria}\\
%{\sl e-mail:} \texttt{i.landjev@nbu.bg}

\noindent
{\sl Institute of Mathematics and Informatics}\\
{\sl Bulgarian Academy of Sciences}\\
{\sl 8 Acad. G. Bonchev str.}\\
{\sl 1113 Sofia, Bulgaria}\\
{\sl e-mail:} \texttt{konstantin.vorobev@gmail.com}

\begin{thebibliography}{99}

\bibitem{Bannai-et-al} E. Bannai, E. Bannai, D. Stanton, An Upper Bound for the Cardinality of an $s$-distance Subset in Real Euclidean Space, II, 
\emph{Combinatorica} 3(2)(1982), 147--152.

\bibitem{Barg-et-al} A. Barg, A. Glazyrin, Wei-Jiun Kao, Ching-Yi Lai,
Pin-Chieh Tseng, Wei-Hsuan Yu, On the size of maximal codes with 2, 3, and 4 distances,
\emph{Combinatorial Theory} 4(1)(2024), \# 7. 

\bibitem{BM11} A Barg, O. Musin, Boundes on sets with few distances, \emph{J. Comb. Theory}
Ser. A 118(4)(2011), 1465--1474.{Combinatorial Theory}

\bibitem{Blok82} A. Blokhuis, An upper bound for the cardinality of $s$-distance sets in
$E^d$ and $H^d$, \emph{Eindhoven Univ. Techn. Memorandum} 1982-68, May 1982.

\bibitem{Blok83} A. Blokhuis, Few distance sets, PhD Thesis,
Eindhoven University of Technology, 1983.


\bibitem{BDZZ21} P. Boyvalenkov, K. Delchev, D. V. Zinoviev, V. A. Zinoviev 
On two-weight codes,
\textit{Discrete Math.}, 344(5)(2021), 112318,
https://doi.org/10.1016/j.disc.2021.112318.

\bibitem{BDZZ22} P. Boyvalenkov, K. Delchev, D. V. Zinoviev, V. A. Zinoviev 
On codes with distances $d$ and $n$, \textit{Probl. of Inform. Transmission}
58(4)(2022), 62--83.

\bibitem{CvL80} P. Cameron, J. H. van Lint,  Graphs, Codes and Designs, London Math. Soc Lecture Note Series 43, Cambridge Univ. Press, 1980

\bibitem{LRV23} I. Landjev, A. Rousseva, K. Vorobev, Constructions of binary codes with two distances, \emph{Discrete Math.} 246(2023), 113337.

\bibitem{NS10} H. Nozaki, M. Shinohara, On a generalization of distance sets, \emph{J. Comb. Theory} Ser. A 117(7)(2010), 810--826.
\end{thebibliography}
\end{document}